\newtheorem{anyprop}{Anyprop}[section]
\newtheorem{theorem}[anyprop]{Theorem}
\newtheorem{lemma}[anyprop]{Lemma}
\newtheorem{corollary}[anyprop]{Corollary}
\theoremstyle{definition}
\newtheorem{definition}[anyprop]{Definition}
\newtheorem{example}[anyprop]{Example}
\newtheorem{remark}[anyprop]{Remark}
\theoremstyle{remark}
\numberwithin{equation}{section}
\begin{document}
\title[ON LOCAL UNIFORM REDUCTIONS OF CERTAIN CLASSES OF VARIETIES]
{ON POSITIVE-CHARACTERISTIC SEMI-PARAMETRIC LOCAL REDUCTIONS OF FINITELLY GENERATED $\mathbb{Q}$-ALGEBRAS}

\author[Edisson Gallego]{Edisson Gallego}
\author[Danny A. J. G\'omez-Ram\'irez]{Danny Arlen de Jes\'us G\'omez-Ram\'irez}
\author[Juan D. Velez]{Juan D. V\'elez}
\address{University of Antioquia, Calle 67 \# 53-108, Medell\'in, Colombia}
\address{Vienna University of Technology, Institute of Discrete Mathematics and Geometry,
wiedner Hauptstaße 8-10, 1040, Vienna, Austria.}
\address{Universidad Nacional de Colombia, Escuela de Matem\'aticas,
Calle 59A No 63 - 20, N\'ucleo El Volador, Medell\'in, Colombia.}
\email{egalleg@gmail.com}
\email{daj.gomezramirez@gmail.com}
\email{jdvelez@unal.edu.co}



\begin{abstract}
We present a non-standard proof of the fact that the existence of a local 
(i.e. restricted to a point) characteristic-zero, semi-parametric lifting for a variety 
defined by the zero locus of polynomial equations over the integers is equivalent to 
the existence of a collection of local semi-parametric (positive-characteristic) reductions
of such variety for almost all primes (i.e. outside a finite set), and such that there exists a 
global complexity bounding all the corresponding structures involved. Results of this kind are a fundamental tool for 
transferring theorems in commutative algebra from a characteristic-zero setting to a positive-characteristic one.

\end{abstract}

\maketitle

\noindent Mathematical Subject Classification (2010): 03C20, 03C98, 13B99, 11D72

\smallskip

\noindent Keywords: Lefschetz's principle, hight, complexity, lifting, prime characteristic, radical ideal\footnote{This paper should be cited as follows E. Gallego, D. A. J. G\'omez-Ram\'irez and and J. D. V\'elez. On Positive-Characteristic Semi-parametric Local Uniform Reductions of Varieties
over Finitely Generated $Q$-Algebras. Results in Mathematics, pp. 1-9, 2017. This article is distributed under the terms of the Creative Commons Attribution 4.0 International License (\url{http://creativecommons.org/licenses/ by/4.0/}), which permits unrestricted use, distribution, and reproduction in any medium, provided you give appropriate credit to the original author(s) and the source, provide a link to the Creative Commons license, and indicate if changes were made.

}
\section*{Introduction}

In this article we present a characterization of the fact that a finite system 
of polynomial equations over the integers has a local solution (i.e. a punctual one) over a characteristic 
zero $k$-algebra, where $k$ is a field, such that the first $n$-components of it represent a system of quasi-parameters 
(i.e. they generate a maximal ideal which induces a natural `residual' isomorphism with $k$). This 
equivalence is given in terms of the existence of positive-characteristic solutions with analogous properties and whose
complexity can be uniformly bounded. This result can be seen as a kind of local-global criterion 
for the existence of punctual solutions of elementary `Diophantine' varieties, which satisfies a sort 
of `semi-parametric' condition. A similar result was obtained by Hochster in \cite[Pag. 22]{hochster} with the restrictions 
that the algebras involved should be integral domains and the first $m$-components should generate a maximal ideal. 
In addition, Hochster's original proof uses quite intricate algebraic and homological (standard) methods.
As we will see in the last section, the former constraints can be essentially avoided due to the powerful non-standard methods that
we use in our proof, which are essentially based in the remarkable introduction of ultraproducts in commutative algebra due to the work of H. Schoutens  \cite[\S 1, 5]{bounds}, \cite[Ch. 4]{schoutens}.

\section{Preliminary facts}

We start by recalling the following definitions (see \cite{bounds}). 
Throughout this discussion, we will fix a monomial order in the
polynomial ring $k\left[ x_{1},\ldots ,x_{n}\right] $, where $k$ denotes a field. 

\begin{definition}
Let $R$ be a finitely generated $k$-algebra.

\begin{enumerate}
\item Let $I$ be an ideal of $k\left[ x_{1},\ldots ,x_{n}\right] .$ We will
say that $I$ has \textbf{complexity} \textbf{at most } $d$, if $n\leq d,$
and it is possible to choose generators\ for $I,$ $f_{1},\ldots ,f_{s},$
with $\deg f_{i}\leq d,$ for $i=1,\ldots ,s$.

\item We say $R$ has \textbf{complexity at most }$d$ if there is a
presentation of $R$ as a quotient $k\left[ x_{1},\ldots ,x_{n}\right] /I$,
with $I$ an ideal of complexity at most $d$.

\item If $J\subseteq R$ is an ideal, we will say that $J$ \textbf{\ has
complexity at most }$d$, if $R$ has complexity less than or equal to $d,$
and there exists a lifting of $J$ in $k\left[ x_{1},\ldots ,x_{n}\right] $,
let us say $J^{\prime },$ with complexity at most $d$.
\end{enumerate}
\end{definition}

\begin{remark}
In (1), the number of generators of $I$ may always be bounded in terms of $d$%
. In fact, without loss of generality we can assume that all the $f_{i}$ are
monic, and also that the leading terms of $f_{i}$ and $f_{j}$ are different
from each other, when $i\neq j$ (if they have same leading term, we can
change $f_{j}$ by $f_{j}-f_{i}$ and get a new set of generators for $I$
satisfying this last property). So, $s\leq D,$ where $D$ is the number of
monomials of degree $d$, $D=\left\vert \left\{ x_{1}^{r_{1}}\cdots
x_{n}^{r_{n}}\mid \Sigma _{i=1}^{n}r_{i}\leq d\right\} \right\vert .$ It is
then easy to see that $D=\tbinom{n+d}{n}\leq \tbinom{2d}{d}$, since $d\geq n$%
.
\end{remark}

Let $A=k\left[ x_{1},\ldots ,x_{n}\right] $ be the polynomial ring with a
fixed monomial order. For any polynomial $f\in A$ we will denote by $a_{f}$
the tuple of all the coefficients of $f$. When the complexity of $I$ is at
most $d,$ and $I=(f_{1},\ldots ,f_{s})$, by adding zeroes if necessary, we
may always assume that $s=D,$ where $D$ is the number defined above. Then,
the ideal $I$ can be encoded by a tuple of the form 
\begin{equation*}
a_{I}=\left( n,\underset{\text{of }f_{1}}{\underset{\text{D coefficients}}{%
\_\_\_\_\_\_\_\_}},\ldots ,\underset{\text{of }f_{s}}{\underset{\text{D
coefficients}}{\_\_\_\_\_\_\_\_}}\right) \in \mathbb{N}\times k^{D^{2}},
\end{equation*}%
where the monomials are listed according to the fixed monomial order.
Conversely, given one of those tuples, $a$, we can always reconstruct the
ideal it comes from. This ideal we shall denote by $\mathcal{I}\left(
a\right) $. Similarly, if $R$ is a $k$-algebra with complexity at most $d$,
then $R$ can be written as $k\left[ x_{1},\ldots ,x_{n}\right] /\mathcal{I}%
\left( a\right) .$ We will express this fact as $R=\mathcal{R}\left(
a\right) $.

For the sake of clarity for the reader 
and for introducing some important terminology used later, we will state explicitly  
some seminal results described in \cite{bounds}. Let us recall that if  
 $\Phi(x_1,\cdots,x_n)$ is a first-order formula, where $x_1,\cdots,x_n$ are the free 
variables, then the support of $\Phi$ with respect to a fixed interpretation $A$, 
denoted by $|\Phi|_{A}$, consists of all the $n-$tuples $(a_1,\cdots,a_n) \in A^n$ 
such that $\Phi(a_1,\cdots,a_n)$ is true in $A$. In addition, if 
$a_W=(n,a_1,\cdots,a_E)\in \mathbb{N}\times k^{D^{2}}$ is the code of some 
algebraic structure $W$, then we say that $a_W$ belongs to $|\Phi|_{k}$ if 
$(a_1,\cdots,a_E)\in |\Phi|_{k}$.

\begin{theorem}
(\cite{bounds}, Proposition 5.1.) For each $d,h>0$, there exists a formula $%
\mathrm{Height}_{d}=h$ such that for any field $k$, any finitely generated $%
k $-algebra $R$ of complexity at most $d,$ and any ideal $I\subseteq R$ of
complexity at most $d$, the height of $I$ is equal to $h$ if and only if $%
(a,b)\in |\mathrm{Height}_{d}=h|_{k}$, where $a,b$ are codes for $R$ and $I$%
, respectively.
\end{theorem}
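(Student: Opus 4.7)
The plan is to encode the height of $I$ by the existence and non-existence of suitable chains of prime ideals, and then to show that each ingredient of this characterization can be expressed by a first-order formula in the coefficient tuples, provided we have a \emph{uniform} bound on the complexity of the primes that need to appear. By the definition of height,
\[
\mathrm{ht}(I) \;=\; \min\{\mathrm{ht}(P) : P \text{ is a minimal prime of } I\},
\]
so $\mathrm{ht}(I)=h$ is equivalent to the conjunction of the following two statements: (i) there exist prime ideals $P_0\subsetneq P_1\subsetneq\cdots\subsetneq P_h$ of $R$ with $I\subseteq P_h$; and (ii) there is no chain of primes of length $h+1$ ending with a prime containing $I$. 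Since by Krull's height theorem and the complexity hypothesis one has $h\le d$, the quantification on chain length is a priori bounded.

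The second step is to verify that, \emph{once the complexities of the $P_i$ are bounded}, each elementary predicate is first-order definable in the codes. Concretely: given a uniform bound $N=N(d)$ on the complexity of primes that need to be considered, one writes a formula $\mathrm{Prime}_{N}(b)$ asserting that the code $b$ defines a proper ideal $J=\mathcal{I}(b)$ such that for every pair of polynomials $f,g$ of degree $\le M=M(d,N)$ the implication $fg\in J\Rightarrow f\in J\vee g\in J$ holds; here $M$ comes from an effective ideal-membership bound (Hermann-style, again furnished by the Schoutens framework), and the predicate $f\in J$ is itself a first-order condition on the coefficients of $f$ and the entries of $b$, obtained by existentially quantifying bounded-degree cofactors $r_1,\ldots,r_s$ in a representation $f=\sum r_i f_i$. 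Strict containment $J\subsetneq J'$ similarly reduces to membership statements. Assembling, the formula $\mathrm{Height}_d=h$ becomes
\[
\exists\, b_{0},\ldots,b_{h}\;\Bigl(\bigwedge_{i} \mathrm{Prime}_{N}(b_{i}) \;\wedge\; \mathrm{Incl}(a,b_{h}) \;\wedge\; \bigwedge_{i<h} \mathrm{StrictIncl}(b_{i},b_{i+1})\Bigr) \;\wedge\; \neg(\text{analogous statement for chain length }h{+}1),
\]
an honest first-order formula in the language of $k$ because all quantifiers run over the finite-dimensional vector space $k^{?}$ determined by $N,M,d,h$.

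The main obstacle is precisely the existence of the uniform bound $N(d)$ on the complexities of the primes to be quantified over, and the allied effective bound $M(d,N)$ for ideal membership. These are the genuinely deep inputs — they are furnished by the results of Schoutens recalled from \cite{bounds}: via ultraproducts and the Noetherianity of the cataproduct, one shows that if no such bound existed, then one could build in an ultrapower a counterexample whose translate in a limit ring would violate Noetherianity or the primary-decomposition theorem. Once these bounds are in hand, the remaining work is bookkeeping: collecting the coefficient variables, substituting the definable predicates $\mathrm{Prime}_{N}$, $\mathrm{Incl}$, $\mathrm{StrictIncl}$ into the chain-statement above, and checking that the resulting formula depends only on $d$ and $h$, not on $k$ or on the particular $R$ and $I$. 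This yields the desired $\mathrm{Height}_d=h$ and completes the argument.
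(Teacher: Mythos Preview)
The paper itself does not prove this theorem at all: it is quoted verbatim as Proposition~5.1 of \cite{bounds} and used as a black box, so there is no ``paper's own proof'' to compare against. Your sketch is therefore an attempt to reconstruct Schoutens's argument, and the overall architecture---reduce to chains of primes, invoke uniform complexity bounds for primality, ideal membership, and minimal primes, then assemble a first-order formula---is indeed the right shape.

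That said, there is a genuine error in your first step. Your conditions (i) and (ii) do \emph{not} characterize $\mathrm{ht}(I)=h$. Condition~(i) says some prime containing $I$ has height at least $h$; condition~(ii) says every prime containing $I$ has height at most $h$. Together these pin down the \emph{maximum} of $\mathrm{ht}(P)$ over primes $P\supseteq I$, whereas $\mathrm{ht}(I)$ is the \emph{minimum}. In fact, for any proper ideal $I$ your condition~(ii) forces $\dim R\le h$, since $I$ is contained in a maximal ideal. A concrete counterexample: in $R=k[x,y]/(xy)$ with $I=(x)$, one has $\mathrm{ht}(I)=0$ (as $(x)$ is a minimal prime of $R$), yet the chain $(y)\subsetneq(x,y)$ with $I\subseteq(x,y)$ witnesses your (i) for $h=1$, and there is no chain of length $2$ in $R$, so your (ii) also holds for $h=1$.

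The correct encoding requires quantifying over the \emph{minimal} primes of $I$: $\mathrm{ht}(I)=h$ iff (a) every prime $P$ minimal over $I$ admits a chain of length $h$ below it, and (b) some prime $P$ minimal over $I$ admits no chain of length $h{+}1$ below it. Minimality of $P$ over $I$ is itself first-order once bounded complexity of the minimal primes is available (which is exactly the content of the primary-decomposition bound you later cite). Once you repair the quantifier structure in this way, the remainder of your outline goes through. As a minor point, in your displayed formula the inclusion should read $\mathrm{Incl}(b,b_{h})$ with $b$ the code for $I$, not $a$ (the code for $R$).
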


\begin{example}
\label{altura de un ideal} Let $k$ be an algebraic closed field of
characteristic $0$, let $I\subseteq k[x_{1},\ldots ,x_{n}]$ be an ideal of
height $h$ and complexity at most $d.$ Let $a_{I}$ be a code for $I$ (so
that, $k\models \exists (\xi )($\textrm{Height}$_{d}(\xi )=h)$, namely 
\textrm{Height}$_{d}(a_{I})=h,$ holds in $k$). So, by Lefschetz's Principle $%
\mathbb{F}_{p}^{alg}\models \exists (\xi )($\textrm{Height}$_{d}(\xi )=h),$
for all $p>m$, where $m$ is some (fixed) integer. Let $a_{I^{\prime
}}^{\prime }$ be a tuple in $\mathbb{F}_{p}^{alg}$ for which, after
substitution, the sentence $\exists (\xi )($\textrm{Height}$_{d}(\xi )=h)$
holds true in $\mathbb{F}_{p}^{alg}$ for $p>m$. Then, by decoding $%
a_{I^{\prime }}^{\prime }$, we may find an ideal $I^{\prime }\subseteq \mathbb{%
F}_{p}^{alg}[x_{1},\ldots ,x_{n}]$ of height $h$ and with complexity at most 
$d$.
\end{example}

\begin{corollary}
\label{member} 
(\cite{schoutens}, Theorem 4.4.1.)Given $d>0$, there exists a formula \textrm{IdMem}$_{d}$ such
that for any field $k$, any ideal $I\subseteq k\left[ x_{1},\ldots ,x_{n}%
\right] $ and any $k$-algebra $R$, both of complexity at most $d$ over $k$,
it holds that $f\in IR$ if an only if $k\models $\textrm{IdMem}$%
_{d}(a_{f},a_{I})$. Here $a_{f}$ and $a_{I}$ denote codes for $f$ and $I$
respectively.
\end{corollary}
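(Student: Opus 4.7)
The plan is to reduce ideal membership in the quotient $R$ to a bounded polynomial identity in the ambient polynomial ring, and then encode that identity as a single first-order formula in the language of fields. First I would observe that if $R = k[x_1,\ldots,x_n]/J$ with $J$ of complexity at most $d$, then $f \in IR$ is equivalent to $f \in I + J$ inside $k[x_1,\ldots,x_n]$, so it suffices to produce a uniform formula expressing ``$f$ lies in the ideal generated by $h_1,\ldots,h_N$'' when all the $h_i$ have degree at most $d$ and $n \le d$. Since both $I$ and $J$ have complexity at most $d$, concatenating their generators yields such a system, with $N$ controlled by the binomial bound from the remark following the complexity definition. (Note that the formula must take, or have built into it, a code for $R$ as well; the statement's notation $\mathrm{IdMem}_d(a_f,a_I)$ tacitly includes this datum.)

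Next I would invoke an effective ideal-membership bound of Hermann--Seidenberg type: there exists an explicit function $B(d)$ (doubly exponential in $d$, though the precise shape is immaterial) such that whenever $f = \sum g_i h_i$ admits \emph{some} solution in $k[x_1,\ldots,x_n]^{N}$ with $\deg(f)\le d$ and $\deg(h_i)\le d$, it admits one with $\deg(g_i)\le B(d)$. Given this bound, the existence of suitable cofactors $g_i$ becomes equivalent to the solvability of a finite polynomial system in the unknown coefficients of the $g_i$, obtained by matching coefficients in $f = \sum g_i h_i$ monomial by monomial. Both the number of unknowns and the number of equations depend only on $d$, not on $k$ or on the particular data $I, R, f$.

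The formula $\mathrm{IdMem}_d$ is then assembled by decoding $a_f$ and $a_I$ (together with the code for $R$) into coefficient vectors, introducing finitely many existentially quantified coefficient variables for each cofactor $g_i$, and writing out the polynomial identities that express $f = \sum g_i h_i$ coefficient-by-coefficient. Every quantifier block and every atomic equation is determined by $d$ alone, so $\mathrm{IdMem}_d$ is a single first-order formula in the field language whose truth in $k$ is by construction equivalent to $f \in IR$. The main obstacle is precisely the effective degree bound $B(d)$: without it the cofactors would have to range over polynomials of arbitrarily large degree and no single first-order formula could suffice. Hermann's theorem (or any of the more modern quantitative Nullstellens\"atze) supplies exactly this bound, and once it is in hand the remainder of the argument is bookkeeping to package a bounded system of equations as one formula, with uniformity in $k$ being automatic because all quantifiers range over field elements constrained by polynomial identities.
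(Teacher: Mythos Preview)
The paper does not supply its own proof of this statement: it is recorded as a corollary and attributed directly to \cite{schoutens}, Theorem~4.4.1, with no argument given. Your sketch is the standard route to this result and is essentially what underlies Schoutens' proof: reduce $f\in IR$ to $f\in I+J$ in the ambient polynomial ring, invoke a Hermann--Seidenberg effective bound $B(d)$ on the degrees of the cofactors $g_i$ in $f=\sum g_ih_i$, and then encode the resulting finite linear system (in the unknown coefficients of the $g_i$) as a single existential first-order formula over the field. Your observation that the code for $R$ must also be fed to the formula is correct and clarifies a slight notational looseness in the statement. There is no gap; the only nontrivial ingredient is precisely the uniform degree bound, which you identify.
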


\begin{theorem}
(\cite{schoutens} Theorem 4.4.6) For any pair of integers $d,n>0$,
there exists a bound $b=b(d,n)$ such that for any field $k$, and any ideal $%
I\subseteq k[x_{1},\ldots ,x_{n}]$ of complexity at most $d$, its radical $J=%
\mathrm{Rad}(I)$ has complexity at most $b$. Moreover, $J^{b}\subseteq I,$ and 
$I$ has at most $b$ distinct minimal primes all of which are generated by
polynomials of degree at most $b$.
\end{theorem}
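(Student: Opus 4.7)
The plan is to argue by contradiction using the ultraproduct methods of \cite{bounds, schoutens}. Suppose the conclusion fails: no single bound $B$ simultaneously controls (i) the complexity of $\mathrm{Rad}(I)$, (ii) the smallest exponent $N$ with $\mathrm{Rad}(I)^N \subseteq I$, (iii) the number of minimal primes of $I$, and (iv) the complexity of each such minimal prime. By pigeonhole, for one of these four quantities there is a sequence of fields $(k_m)_{m\in\NN}$ and ideals $I_m \subseteq k_m[x_1,\ldots,x_n]$ of complexity at most $d$ whose corresponding invariant exceeds $m$. Each $I_m$ is encoded by a tuple $a_{I_m} \in \NN \times k_m^{D^2}$ whose first coordinate is uniformly bounded by $d$.

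Fix a non-principal ultrafilter $\mathcal{U}$ on $\NN$ and form the ultraproduct field $K := \prod_m k_m / \mathcal{U}$. The ultralimit of the tuples $a_{I_m}$ is a well-defined code $a^* \in \NN \times K^{D^2}$ decoding to an ideal $I^* := \mathcal{I}(a^*) \subseteq K[x_1,\ldots,x_n]$ of complexity at most $d$, whose generators are the coefficient-wise ultralimits of the generators of the $I_m$. Since $K[x_1,\ldots,x_n]$ is Noetherian, the radical $J^* := \mathrm{Rad}(I^*)$ is finitely generated of some complexity $c^*$, satisfies $(J^*)^{N^*} \subseteq I^*$ for some exponent $N^*$, and $I^*$ has only finitely many minimal primes $\mathfrak{p}_1^*,\ldots,\mathfrak{p}_r^*$, each of complexity at most some $C^*$. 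Set $B := \max\{c^*, N^*, r, C^*\}$.

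The key step is then to transfer these facts back to $\mathcal{U}$-almost every $k_m$ via \L{}o\'s's theorem. The assertion ``there exist codes $a, a_1, \ldots, a_s$ with $s \le B$, each of complexity at most $B$, such that, writing $J := \mathcal{I}(a)$ and $\mathfrak{p}_i := \mathcal{I}(a_i)$, one has $I \subseteq J$, $J^B \subseteq I$, each $\mathfrak{p}_i$ is prime and contains $I$, and $J = \mathfrak{p}_1 \cap \cdots \cap \mathfrak{p}_s$'' is a first-order condition on the code $a_I$: ideal membership, inclusion, and power-containment are captured by $\mathrm{IdMem}_B$ of Corollary \ref{member} and its variants, and primality of an ideal of bounded complexity is expressible by the analogous formulas from \cite{schoutens}. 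Since the assertion is true in $K$ (witnessed by the codes of $J^*$ and the $\mathfrak{p}_i^*$), by \L{}o\'s it is true in $k_m$ for $\mathcal{U}$-almost every $m$; decoding, the resulting $J_m$ equals $\mathrm{Rad}(I_m)$ and simultaneously bounds all four invariants of $I_m$ by $B$, contradicting the choice of the sequence $(k_m, I_m)$.

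The principal obstacle is the first-order formulation, with uniformly bounded complexity, of primality of an ideal of bounded complexity and of the intersection decomposition $J = \mathfrak{p}_1 \cap \cdots \cap \mathfrak{p}_s$; once such formulas are at hand (they reduce to variants of the bounded ideal-membership formulas in \cite{bounds, schoutens}), the remainder of the argument is routine model-theoretic bookkeeping, with the essential mathematical input being the Noetherianity of $K[x_1,\ldots,x_n]$.
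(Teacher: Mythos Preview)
The paper does not give its own proof of this theorem: it is stated with an explicit citation to \cite[Theorem 4.4.6]{schoutens} and used as a black box in the preliminaries. Your proposal is in fact a faithful outline of Schoutens's own argument in the cited reference: build a counterexample sequence, pass to an ultraproduct field $K$, invoke Noetherianity of $K[x_1,\ldots,x_n]$ to obtain finite data (generators of the radical, an exponent, a primary decomposition) for the limit ideal $I^*$, and transfer these bounds back via \L o\'s's theorem. The first-order expressibility of ideal membership (Corollary~\ref{member}, i.e.\ \cite[Theorem 4.4.1]{schoutens}) and of primality (\cite[Theorem 4.4.4]{schoutens}, stated just after the present theorem in the paper) are logically prior in Schoutens's development, so your appeal to them is not circular.

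One point you should make explicit in the final write-up: the statement you transfer only says that $J_m=\mathrm{Rad}(I_m)$ is an intersection of at most $B$ primes $\mathfrak{p}_1,\ldots,\mathfrak{p}_s$ of complexity $\leq B$ containing $I_m$, not that these are precisely the minimal primes of $I_m$. This is harmless: any minimal prime $q$ of $I_m$ contains $\bigcap_i \mathfrak{p}_i$, hence contains some $\mathfrak{p}_i$, and minimality then forces $q=\mathfrak{p}_i$; so the minimal primes of $I_m$ lie among your list and inherit the bound.
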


\begin{example}
\label{Formula-radical} Given $d,n>0,$ there exists a formula \textrm{Rad}$%
_{d}$ such that for any field $k$ and any pair of ideals $P,I\subseteq
k[x_{1},\ldots ,x_{n}]$ of complexity at most $d$, with $P$ a prime ideal
containing $I$, it holds: the radical of $I$ is $P$ (i.e., $\mathrm{Rad}(I)=P
$) if and only if $k\models \mathrm{Rad}_{d}(a_{I},a_{P})$. Here $a_{I},a_{P}
$ are codes for $I$ and $P$ respectively.\\*[0pt]
In fact, by the last theorem, we know that there exists a bound $b=b(d,n)$,
depending only on $d$ and $n$ such that $P^{b}\subseteq I.$ But this is
equivalent to saying that $\mathrm{Rad}(I)=P,$ since $\mathrm{Rad}(I)$ is
the intersection of all prime ideals containing $I$. It is then sufficient
for the formula $\mathrm{Rad}_{d}$ to express that the product of any set of 
$b$ elements between the bounded generators of $P$ lies in $I$. Now, this
can be done by means of a first order formula, by using Corollary \ref%
{member}.
\end{example}

\begin{remark}
\label{Inc} Using Corollary \ref{member}, it is easy to get for each $d,$
formulas \textrm{Inc}$_{d}$ and \textrm{Equal}$_{d}$ such that if $R$ is a
finitely generated $k$-algebra with complexity at most $d,$ and if $J$ and $I
$ are ideals of $R$ with complexity less than $d$, then $(a_{I,}a_{J})\in
\left\vert \text{Inc}_{d}\right\vert _{k}$ (resp. $(a_{I,}a_{J})\in
\left\vert \text{Equal}_{d}\right\vert _{k}$) if and only if $I\subseteq J$
(resp. $I=J$).
\end{remark}

\begin{theorem}
(\cite{schoutens} Theorem 4.4.4) For any pair of integers $d,n>0$,
there exists a bound $b=b(d,n)$ such that for any field $k$ and any ideal $%
P\subseteq k[x_{1},\ldots ,x_{n}]$ of complexity at most $d$, $P$ is a prime
ideal if and only if for any two polynomials $f,g$ of complexity at most $b$
which do not belong to $P$ then neither does their product.
\end{theorem}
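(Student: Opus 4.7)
The forward implication (primality $\Rightarrow$ the test holds) is immediate from the definition of a prime ideal. For the reverse, I argue the contrapositive: if $P \subseteq k[x_{1},\ldots,x_{n}]$ of complexity $\leq d$ is not prime, I exhibit witnesses $f, g \notin P$ with $fg \in P$ whose complexity is bounded by some $b = b(d,n)$. The central input is the preceding theorem, which supplies $b_{0} = b_{0}(d,n)$ such that the radical $J := \mathrm{Rad}(P)$ has complexity $\leq b_{0}$, $J^{b_{0}} \subseteq P$, and $P$ admits at most $b_{0}$ minimal primes $Q_{1},\ldots,Q_{m}$ (so $m \leq b_{0}$), each generated by polynomials of degree $\leq b_{0}$. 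I split into two cases according to whether $P$ is radical.

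\emph{Case 1: $P \neq J$.} Since $J$ is generated by polynomials of degree $\leq b_{0}$, not every such generator can lie in $P$ (else $J \subseteq P$). Pick a generator $h$ of $J$ with $h \notin P$; because $h^{b_{0}} \in J^{b_{0}} \subseteq P$, there is a least exponent $e \in \{2,\ldots,b_{0}\}$ with $h^{e} \in P$. Then $f := h$ and $g := h^{e-1}$ satisfy $f, g \notin P$ and $fg = h^{e} \in P$, with complexity at most $(b_{0}-1)b_{0} \leq b_{0}^{2}$.

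\emph{Case 2: $P = J$ with $m \geq 2$.} Now $P = Q_{1} \cap \cdots \cap Q_{m}$, with the $Q_{i}$ pairwise incomparable. One has $P \subsetneq Q_{1}$: otherwise $P = Q_{1}$ would give $Q_{1} \subseteq Q_{i}$ for all $i$, forcing $Q_{1} = Q_{i}$ by minimality of $Q_{i}$ and contradicting distinctness. Hence some generator $f$ of $Q_{1}$ lies outside $P$. Likewise, for each $i \geq 2$ the relation $Q_{i} \not\subseteq Q_{1}$ yields a generator $h_{i}$ of $Q_{i}$ with $h_{i} \notin Q_{1}$, of complexity $\leq b_{0}$. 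Setting $g := h_{2}\cdots h_{m}$, primality of $Q_{1}$ gives $g \notin Q_{1} \supseteq P$, while $g \in \bigcap_{i \geq 2} Q_{i}$ and $f \in Q_{1}$ together yield $fg \in \bigcap_{i=1}^{m} Q_{i} = P$. The complexities of $f$ and $g$ are $\leq b_{0}$ and $\leq (m-1)b_{0} \leq b_{0}^{2}$, respectively.

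Setting $b := b_{0}^{2}$ yields a bound depending only on $d$ and $n$. The delicate point, where I would focus scrutiny, is Case 2: one must observe that it suffices to find $f \in Q_{1} \setminus P$ rather than the much stronger $f \in Q_{1} \setminus \bigcup_{i \geq 2} Q_{i}$, thereby sidestepping a general prime-avoidance argument whose natural formulation over small finite fields would require bumping up degrees and weakening the bound $b(d,n)$.
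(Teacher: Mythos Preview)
Your argument is correct. Both cases are handled cleanly: in Case~1 you extract a nilpotent witness from a bounded-degree generator of the radical, and in Case~2 you exploit incomparability of the bounded minimal primes to build $f\in Q_{1}\setminus P$ and $g\in\bigl(\bigcap_{i\geq 2}Q_{i}\bigr)\setminus Q_{1}$, so that $fg\in P$ while $f,g\notin P$. The observation that one only needs $f\in Q_{1}\setminus P$ (rather than $f$ avoiding all other $Q_{i}$) is exactly what makes the bound uniform in the field and keeps the degree estimate at $b_{0}^{2}$.

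There is, however, nothing to compare against in this paper: the theorem is quoted here as \cite{schoutens}, Theorem~4.4.4, without proof, and is used only to justify the existence of the first-order formula $\mathrm{Prime}_{d}$ in the subsequent remark. Your derivation from the preceding cited result (the uniform bounds on radicals and minimal primes, \cite{schoutens}, Theorem~4.4.6) is a natural and self-contained route, and it is in the same spirit as Schoutens's original treatment, where primality is reduced to bounded data about the primary (or prime) decomposition.
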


\begin{remark}
Given $d,n>0$ there exists a formula \textrm{Prime}$_{d}$ such that for any
field $k$ and any ideal $P\subseteq k[x_{1},\ldots ,x_{n}]$ of complexity at
most $d$, $P$ is a prime ideal if and only if $k\models \mathrm{Prime}%
_{d}(a_{P})$. Where $a_{P}$ is a code for $P$.\\*[0pt]
The existence of this formula follows from the last theorem, and from
Corollary \ref{member}.
\end{remark}

\begin{example}
\label{Formula- idealprimo} Let $k$ be an algebraic closed field with 
${\rm char}(k)=0$, let $P$ be a prime ideal in $k[x_{1},\ldots ,x_{n}]$ of
complexity at most $d,$ and let $a_{P}$ be a code for $P$.

 So, $k\models \exists
(\xi )$\textrm{Prime}$_{d}(\xi )$, since \textrm{Prime}$_{d}(a_{P})$ holds
in $k$. By Lefschetz's Principle $\mathbb{F}_{p}^{alg}\models \exists (\xi )$%
\textrm{Prime}$_{d}(\xi )$ for all $p>m,$ for some $m$.

Furthermore, if $a_{P^{\prime }}^{\prime }$ is a tuple in $\mathbb{F}%
_{p}^{alg}$ for which the sentence $\exists (\xi )\mathrm{Prime}_{d}(\xi )$
is true in $\mathbb{F}_{p}^{alg},$ for a fix prime number $p>m$, then, by
decoding $a_{P^{\prime }}^{\prime },$ we may find a prime ideal $P^{\prime
}\subseteq \mathbb{F}_{p}^{alg}[x_{1},\ldots ,x_{n}]$ with complexity at most $%
d$.
\end{example}

\begin{example}
\label{Formula-Ideal Maximal} Given $d,n>0$ there exists a formula \textrm{%
MaxIdeal}$_{d,n}$ such that for any algebraic closed field $k$ and any ideal 
$m\subseteq k[x_{1},\ldots ,x_{n}]$ of complexity at most $d$ we have:\\*[0pt]
$m$ is a maximal ideal if and only if $k\models \mathrm{MaxIdeal}%
_{d,n}(a_{m})$, where $a_{m}$ is a code for $m$. In fact, by the
Nullstellensatz $m$ is maximal if and only if there exist $b_{1},\ldots
,b_{n}\in k$ such that $m=(x_{1}-b_{1},\ldots ,x_{n}-b_{n})$. Let us call $%
J_{\underline{b}}=(x_{1}-b_{1},\ldots ,x_{n}-b_{n})$. Then, the required
formula is: 
\begin{equation*}
\mathrm{MaxIdeal}(\xi ):(\exists b_{1},\ldots ,b_{n})(\mathrm{Equal}_{d}(\xi
,a_{J_{\underline{b}}})),
\end{equation*}%
where $\xi $ and $a_{J}$ must be replaced by the codes $a_{m}$ of $m,$ and $%
a_{J_{\underline{b}}}$ of $J_{\underline{b}},$ respectively.
\end{example}

\begin{lemma}
\label{Lema-construccion}

Let $\{F_{\alpha }(%
\underline{X},\underline{Y})\}_{\alpha =1,\ldots ,l}$ be a polynomial system
of equations with coefficients in $\mathbb{Z}$. Then, the following two conditions 
are equivalent:

(a) There exists a $k-$algebra $S=k[T_{1},\ldots ,T_{\nu }]/I$ (resp. integral domain) over an algebraic
closed field $k$ of characteristic $0$, where $I\subseteq k[T_{1},\ldots
,T_{\nu }]$ is an ideal (resp. prime ideal) of complexity at most $d$; and $m\subseteq
k[T_{1},\ldots ,T_{\nu }]$ a prime ideal (resp. maximal) with complexity at most $d$
and height $n$ in $S$. And, there exists a tuple $(\underline{x},%
\underline{y})=(x_{1},\ldots ,x_{n},y_{1,}\ldots ,y_{r})$ of
elements in $S$ such that:

\begin{enumerate}
\item $\mathrm{Rad}(\underline{x})=m$ in $S$.

\item $F_{\alpha}(\underline{x},\underline{y})=0$, for all $\alpha=1,\ldots,
l $.
\end{enumerate}

(b) There exists $c, d\in \mathbb{N}$, such that for any prime number $p\geq c$, 
we can always construct a $\mathbb{F}_{p}^{alg}$-algebra (resp. $\mathbb{F}_{p}^{alg}$-domain) $S^{\prime }=\mathbb{F}%
_{p}^{alg}[T_{1},\ldots ,T_{\nu }]/I^{\prime },$ with $I^{\prime }\subseteq 
\mathbb{F}_{p}^{alg}[T_{1},\ldots ,T_{\nu }]$ an ideal (resp. prime ideal) of complexity at
most $d$, such that:

\item There exists $m^{\prime }\subseteq \mathbb{F}_{p}^{alg}[T_{1},\ldots
,T_{\nu }],$ a prime ideal (resp. maximal) with complexity at most $d$ and height $n$ in $%
S^{\prime }$, and $(\underline{x^{\prime
}},\underline{y^{\prime }})=(x_{1}^{\prime },\ldots ,x_{n}^{\prime
},y_{1,}^{\prime }\ldots ,y_{r}^{\prime }),$ a tuple of elements in $%
S^{\prime }$ such that:

\begin{enumerate}
\item $\mathrm{Rad}(\underline{x^{\prime }})=m^{\prime }$ in $S^{\prime }$.

\item $F_{\alpha }(\underline{x^{\prime }},\underline{y^{\prime }})=0$, for
all $\alpha =1,\ldots ,l$.
\end{enumerate}
\end{lemma}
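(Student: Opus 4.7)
The plan is to express the content of (a)---and symmetrically of (b)---as a single first-order sentence $\Psi_{d}$ in the language of rings, and then apply Lefschetz's principle to transfer $\Psi_{d}$ between algebraically closed fields of characteristic $0$ and algebraically closed fields of sufficiently large positive characteristic. The auxiliary formulas $\mathrm{Height}_{d}$, $\mathrm{IdMem}_{d}$, $\mathrm{Rad}_{d}$, $\mathrm{Prime}_{d}$ and $\mathrm{MaxIdeal}_{d,\nu}$ recalled in Section~1 are the ingredients that allow the commutative-algebraic conditions appearing in (1)--(2) to be phrased in first-order logic uniformly in the field.

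Concretely, for a fixed $d$ large enough that $\deg F_{\alpha}\le d$ and $n,r,\nu\le d$, I would let $\Psi_{d}$ be the sentence asserting the existence of codes $a_{I},a_{m}$ and of elements $x_{1},\dots,x_{n},y_{1},\dots,y_{r}$ in the field such that: (i) $a_{I}$ codes an ideal (resp.\ a prime ideal, via $\mathrm{Prime}_{d}$) of complexity $\le d$ in $k[T_{1},\dots,T_{\nu}]$; (ii) $a_{m}$ codes a prime (resp.\ maximal, via $\mathrm{MaxIdeal}_{d,\nu}$) ideal of complexity $\le d$; (iii) the height of $m$ in $S=k[T_{1},\dots,T_{\nu}]/I$ equals $n$, using the formula $\mathrm{Height}_{d}=n$; (iv) $\mathrm{Rad}_{d}(a_{I+(\underline{x})},a_{m})$ holds, where the code $a_{I+(\underline{x})}$ is obtained mechanically from $a_{I}$ and the $x_{i}$; and (v) $\mathrm{IdMem}_{d}(a_{F_{\alpha}(\underline{x},\underline{y})},a_{I})$ holds for every $\alpha=1,\dots,l$. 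Clause (v) is uniform across all fields because the $F_{\alpha}$ have integer coefficients, and clauses (iii)--(iv) are exactly the translation of conditions~(1)--(2) of the lemma into first-order form. With $\Psi_{d}$ in hand, (a)$\Rightarrow$(b) is immediate: $k\models\Psi_{d}$ for the given characteristic-$0$ field $k$, so by Lefschetz there exists $c$ with $\mathbb{F}_{p}^{alg}\models\Psi_{d}$ for every prime $p\ge c$, and decoding the witnesses produces the structures required by (b). The converse is symmetric: $\Psi_{d}$ holds in $\mathbb{F}_{p}^{alg}$ for arbitrarily large $p$, hence in every algebraically closed field of characteristic $0$, yielding (a).

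The only non-trivial step is the bookkeeping of the complexity parameter. The ideal $I+(\underline{x})$ in clause~(iv) depends on the existentially quantified witnesses $x_{i}\in S$, so its complexity as an ideal of $k[T_{1},\dots,T_{\nu}]$ is not literally bounded by the same $d$ that bounds $I$; likewise, invoking $\mathrm{Rad}_{d}$ requires the absolute bound $b(d,\nu)$ supplied by the radical theorem quoted in Example~\ref{Formula-radical}. Both issues are absorbed by replacing $d$ throughout by a larger absolute constant $d'=d'(d,n,r,\nu)$ chosen once and for all, so that the $n$ extra linear generators and the bound $b(d',\nu)$ all fit within complexity $d'$. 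Once this uniform $d'$ is fixed, $\Psi_{d'}$ is a genuine first-order sentence to which Lefschetz applies cleanly, and the same $d'$ serves as the complexity bound demanded in (b).
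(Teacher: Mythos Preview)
Your proposal is correct and follows essentially the same strategy as the paper: encode all of the hypotheses as a single first-order sentence built from the auxiliary formulas $\mathrm{Prime}_{d}$, $\mathrm{Height}_{d}$, $\mathrm{Inc}_{d}$, $\mathrm{Rad}_{d}$, $\mathrm{IdMem}_{d}$ (and $\mathrm{MaxIdeal}_{d,\nu}$ in the maximal case), then transfer that sentence between $k$ and $\mathbb{F}_{p}^{alg}$ via Lefschetz's principle. The paper's formula $\Phi_{d}$ differs only cosmetically---it records $\mathrm{ht}(m)=\omega$ and $\mathrm{ht}(I)=\omega-n$ separately in the polynomial ring rather than $\mathrm{ht}(m)=n$ in $S$, and it includes an explicit $\mathrm{Inc}_{d}(a_{I},a_{m})$---and your closing remark on enlarging the complexity parameter $d\mapsto d'$ to absorb the liftings of the $x_{i},y_{j}$ makes explicit a bookkeeping point the paper only mentions in passing.
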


\begin{proof}
$(\Rightarrow)$ First, let us consider the cases where $I$ is a prime ideal and $m$ is a prime ideal,
 i.e, $I\subseteq m,$ and $\mathrm{ht}(m)=\omega =n+%
\mathrm{ht}(I)$ in $k[T_{1},\ldots ,T_{\nu }]$, where $\omega$ is an integer $\leq \nu$
 (resp. $\omega = \nu$, if (and only if) $m$ is maximal).
The hypothesis above may be expressed by means of a first order formula $%
\Phi _{d}$ such that when we evaluate it on the codes of $S,m,I,\{F_{\alpha
}(\underline{X},\underline{Y})\}$, respectively; $k\models \Phi _{d}$ if and
only if $m$ is a maximal ideal of height $n$ in $S$, $I$ is a prime ideal
contained in $m$, and (1), (2) are satisfied. This formula may be
explicitly given as:

\begin{equation*}
\Phi _{d}:(\exists a_{m},a_{I},a_{(\underline{x},\underline{y})})(\mathrm{%
Prime}_{d}(a_{I})\wedge \mathrm{Prime}_d(a_m) 
\end{equation*}%
\begin{equation*}
\mathrm{Inc}_{d}(a_{I},a_{m}) \newline
\wedge \mathrm{Height}_{d}(a_{m})=\omega \wedge \mathrm{Height}_{d}(a_{I})=\omega
-n\wedge \mathrm{Rad}_{d}(a_{(\underline{x})},a_{m})\wedge 
\end{equation*}%
\begin{equation*}
\mathrm{IdMem}_{d}(F_{\alpha }(\underline{x},\underline{y}),a_{I})),
\end{equation*}%

where the degrees of parameters $(\underline{x},\underline{y})$ in $\Phi _{d}
$ is bounded by the degrees of fixed liftings in $k[T_{1},\cdots ,T_{\nu}]$
of the actual existing parameters $(\underline{x},\underline{y})$ in $S$.

Now, by Lefschetz's principle, $k\models \Phi _{d}$, if and only if $\mathbb{%
F}_{p}^{alg}\models \Phi _{d}$, for any prime $p$ large enough. As we
discussed in Examples \ref{Formula-radical}, %
\ref{Formula- idealprimo}, \ref{altura de un ideal} and Remark \ref{Inc};
there are $\mathbb{F}_{p}^{alg}-$tuples $a_{m}^{\prime },a_{I}^{\prime }$
and $a_{(\underline{x},\underline{y})}^{\prime }$ which codify a prime
ideal $m^{\prime }$ (resp. $\mathrm{Height}_{d}(a_{m})=\nu$ codifies additionally the maximality of $m$), a prime ideal $I^{\prime }$, and a system of elements $%
\underline{x},\underline{y}$, satisfying all the required conditions in $%
\mathbb{F}_{p}^{alg}[T_{1},\ldots T_{\nu }]/I^{\prime }$.

So, $S^{\prime }=\mathbb{F}^{alg}_{p}[T_{1},\ldots T_{\nu}]/I^{\prime }$ is
the required $\mathbb{F}^{alg}_{p}$-algebra. Finally, it is clear from above
that $S^{\prime }$ might be constructed of characteristic equal to $p$, for
any prime $p$ big enough.

For the cases where $I$ is not necessarily a prime ideal  
we just eliminate the formula $Prime_d(-)$ on $\Phi(d)$ from the former proof, accordingly.

$(\Leftarrow)$ If there exists a global complexity $d$ satisfying (b) for all prime numbers $p\geq c$, then 
we can construct in each of the former cases a suitable formula $\Phi_d$, such that $\mathbb{F}^{alg}_{p} \models \phi_d$ for all $p\geq r$. 
Thus, by Lefschetz's principle for any algebraic closed field $k$ of characteristic zero, we get $k \models \Phi_d$. 
So, as we have seen before in the examples we can construct the corresponding $k$-structures described in condition (a),
which finishes the proof.

\end{proof}

\begin{remark}
Note that in the former lemma the condition (a) can be re-phrased in a more general way requiring that for a fixed complexity $d$ and any algebraic closed field $k$ 
characteristic zero, those $k$-structures mention there exists. In this case, the proof would be essentially the same due to the usage of the Lefschetz's principle.
\end{remark}

\section{Main Result}

The main theorem of this paper is the following: 

\begin{theorem}
\label{Tma hochster}
Let $\{F_{\alpha }(\underline{X},\underline{Y})\}$%
, with $\alpha =1,\ldots ,l$, be a polynomial system of equations with
coefficients in $\mathbb{Z}$. Then, the following two conditions are equivalent:

(a) There exists a field of characteristic zero $k$, a finitely generated $k$-algebra (resp. domain) $S$;
 a prime ideal $m\subseteq S$ (resp. maximal)
 with height $n$ and $(\underline{x},\underline{y}%
)=(x_{1},\ldots ,x_{n},y_{1,}\ldots ,y_{r})$ a tuple of elements in $S$
such that:

\begin{enumerate}
\item $\mathrm{Rad}(\underline{x})=m.$

\item $F_{\alpha }(\underline{x},\underline{y})=0,$ for every $\alpha .$

\item (If $m$ is a maximal ideal) $k\subseteq S\overset{\pi }{\rightarrow }{S/\!m}$ is an isomorphism.
\end{enumerate}

(b)There exists a global complexity $d$ such that for all prime numbers $p$
not belonging to a finite set,
there exists a field $L$ of prime characteristic, a finitely generated 
$L$-algebra (resp. domain) $S^{\prime }$, a prime ideal $m^{\prime }$ of $S^{\prime }$ (resp. maximal)
with height $n,$ and elements $(\underline{x}^{\prime },\underline{y}%
^{\prime })=(x_{1}^{\prime },\ldots ,x_{n}^{\prime },$ $y_{1,}^{\prime
}\ldots ,y_{r}^{\prime })$ in $S^{\prime }$, all with complexity at most $d$
 such that:

\begin{enumerate}
\item $\mathrm{Rad}(\underline{x}^{\prime })=m^{\prime }.$

\item $F_{\alpha }(\underline{x}^{\prime },\underline{y}^{\prime })=0$, for
all $\alpha .$

\item (If $m^{\prime}$ is a maximal ideal) $%
\begin{array}{ccccc}
L & \overset{i}{\hookrightarrow } & S^{\prime } & \overset{\pi }{%
\longrightarrow } & S^{\prime }/m^{\prime }%
\end{array}%
,$ with $\pi \circ i$ an isomorphism.
\end{enumerate}

\end{theorem}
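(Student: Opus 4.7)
The strategy is to reduce Theorem \ref{Tma hochster} to Lemma \ref{Lema-construccion}, which already establishes the equivalence in the restricted setting where the characteristic-zero field is algebraically closed and a complexity bound $d$ is assumed on both sides. To upgrade to the Theorem we must (i) extract a uniform complexity from the data of (a), and (ii) in each direction, pass between a general field and its algebraic closure while preserving all structural data.

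\textbf{Direction $(a)\Rightarrow(b)$.} Given witnesses $(k, S, m, (\underline{x}, \underline{y}))$, present $S$ as $k[T_1,\ldots,T_\nu]/I$ and let $d$ bound the complexities of $I$, of $m$ (viewed as ideal in $k[T_1,\ldots,T_\nu]$), and of fixed liftings of $(\underline{x},\underline{y})$. Base-change along the faithfully flat inclusion $k\hookrightarrow\overline{k}$ to obtain $\tilde S := \overline{k}\otimes_k S$. In the maximal-domain case (II), condition (3) forces $m=(T_1-b_1,\ldots,T_\nu-b_\nu)$ with $b_i\in k$, so $m\tilde S$ remains maximal with residue field $\overline{k}$; choose a minimal prime $J$ of $I\tilde S$ contained in $m\tilde S$ (such exists since $m\tilde S$ is a prime containing $I\tilde S$) and replace $\tilde S$ by $\tilde S/J$, which is now a domain. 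In the general case (I), pick instead a prime $\tilde m\subseteq\tilde S$ lying over $m$ and minimal with this property. Using faithful flatness and the flat dimension formula to transport heights, and the separability of $\overline{k}/k$ to ensure that radicals commute with base change (so $\mathrm{Rad}(\underline{x})=\tilde m$ on the new algebra), one sees that the hypotheses of Lemma \ref{Lema-construccion}(a) are met over $\overline{k}$ with complexity $d$. That lemma then produces corresponding structures over $\mathbb{F}_p^{alg}$ for all primes $p$ outside a finite set; taking $L=\mathbb{F}_p^{alg}$, condition (3) is automatic by the Nullstellensatz, yielding (b).

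\textbf{Direction $(b)\Rightarrow(a)$.} Given data $(L,S',m',(\underline{x}',\underline{y}'))$ of characteristic $p$ with uniform complexity $d$, apply the analogous base-change trick along $L\hookrightarrow\mathbb{F}_p^{alg}$ together with the minimal-prime selection to land in the hypotheses of Lemma \ref{Lema-construccion}(b). Applying that lemma in the backward direction yields an algebraically closed characteristic-zero field $k$ together with witnesses for Lemma (a); this is already a valid witness for Theorem (a), the residue-field isomorphism (3) being automatic in the algebraically closed maximal setting.

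\textbf{Main obstacle.} The delicate point is the base-change / minimal-prime argument: simultaneously preserving primality (resp.\ maximality) of the chosen ideal, its height, and the radical equality $\mathrm{Rad}(\underline{x})=m$, when passing from $k$ to $\overline{k}$ (or from $L$ to $\mathbb{F}_p^{alg}$). Height preservation rests on the flat dimension formula applied to the chosen minimal lift; the radical equality uses that, in characteristic zero, $\mathrm{Rad}(\mathfrak{a}\overline{k}[T])=\mathrm{Rad}(\mathfrak{a})\overline{k}[T]$ for any ideal $\mathfrak{a}\subseteq k[T_1,\ldots,T_\nu]$, which follows from the separability of $\overline{k}/k$. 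Once these transport facts are in hand, the rest of the argument is a routine reduction to the formulas $\mathrm{Prime}_d$, $\mathrm{Height}_d$, $\mathrm{Rad}_d$, $\mathrm{IdMem}_d$, and $\mathrm{MaxIdeal}_{d,n}$ introduced earlier, exactly as in the proof of Lemma \ref{Lema-construccion}.
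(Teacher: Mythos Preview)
Your overall strategy coincides with the paper's: reduce to an algebraically closed base field and then invoke Lemma \ref{Lema-construccion}. The paper carries out the reduction (in the maximal-domain case) by passing to $R'=\overline{k}\otimes_k R$, using Noether normalization to show that a prime $q$ of $R'$ lying over $m$ is maximal, then localizing at a single element $u\in R'\setminus q$ to globalize the inclusion $q^{n+l}\subseteq(\underline{x})$ obtained first in $R'_q$, and finally modding out a minimal prime to recover a domain. You instead exploit condition (3) directly to see that $m=(T_i-b_i)$ with $b_i\in k$, so maximality after base change is immediate, and you handle the radical via separability of $\overline{k}/k$. Both routes reach the same conclusion.

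There is one genuine slip. In the direction $(b)\Rightarrow(a)$ you write ``base-change along $L\hookrightarrow\mathbb{F}_p^{alg}$'', but an arbitrary field $L$ of characteristic $p$ need not embed in $\mathbb{F}_p^{alg}$ (e.g.\ $L=\mathbb{F}_p(t)$). The correct move is to base-change to $\overline{L}$; once the first-order sentence $\Phi_d$ holds over the algebraically closed field $\overline{L}$, completeness of $\mathrm{ACF}_p$ transfers it to $\mathbb{F}_p^{alg}$, and Lemma \ref{Lema-construccion} then applies. (The paper in fact glosses over this reduction entirely in the $(\Leftarrow)$ step, so you are being more careful here than the original.) Note also that in characteristic $p$ the extension $L\hookrightarrow\overline{L}$ need not be separable, so your separability argument for radicals is unavailable on that side; you would need the localization device the paper uses instead. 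Finally, in your case (I) the claim ``$\mathrm{Rad}(\underline{x})=\tilde m$ on the new algebra'' is too quick: if $m\tilde S$ has several minimal primes over it, the radical of $(\underline{x})\tilde S$ is their intersection, not your chosen $\tilde m$; you must first invert an element killing the other components (exactly the paper's localization step) before the equality holds.
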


\begin{proof}

\textbf{First step}: Reduction of the condition (a) to the case where $k$ is algebraically closed, 
when $m$ is a maximal ideal. The remaining cases can be proved in a basically the same way.

Let $R$ be a finitely generated $k$-algebra, where $k$ is any field
of characteristic zero. Let $m\subseteq R$ a maximal ideal of height $n$, and
let $x_{1},\ldots ,x_{n}$ be elements in $R$ such that $\mathrm{Rad}%
(x_{1},\ldots ,x_{n})=m$, and such that conditions (1)-(3) hold in $R$. Let $%
\overline{k}$ be an algebraic closure of $k$, and define $R^{\prime }=%
\overline{k}\otimes _{k}R$. We notice that $R\longrightarrow R^{\prime }$ is
a faithfully flat extension, and therefore $R$ injects into $R^{\prime }$.
Moreover, $mR^{\prime }\cap R=m$ (\cite{Matsumura} Theorem 7.5, (2), page
49). Consequently, there is a prime ideal $q\subseteq R^{\prime }$ such that $%
q\cap R=m$. We notice that $q$ has to be maximal: Since $R$ is a finitely
generated $k$-algebra domain, all its maximal ideals have the same height,
equal to the Krull dimension of $R$. Hence, by Noether Normalization
Theorem, there are algebraically independent elements $a_{1},\ldots ,a_{n}$
in $R$ such that $A=k[a_{1},\ldots ,a_{n}]\subseteq R$ is a module-finite
extension. But this implies that $\overline{k}[a_{1},\cdots ,a_{n}]\subseteq 
\overline{k}\otimes _{k}R$ is also module-finite extension, and consequently 
$\dim (\overline{k}\otimes _{k}R)=\dim R$. From this, we see that $q$ has to
be a maximal ideal in $R^{\prime },$ since $R^{\prime }/q$ is an integral
domain of dimension zero, i.e., a field.

Now, in $R_{q}^{\prime },$ the ideal $mR_{q}^{\prime }$ is $qR_{q}^{\prime }$%
-primary. Thus, there is a power $n>0$ such that $q^{n}R_{q}^{\prime
}\subseteq mR_{q}^{\prime }$. On the other hand, there is a power $l>0$, such
that $m^{l}R\subseteq (x_{1},\ldots ,x_{n})R.$ Thus, $q^{n+l}R_{q}^{\prime
}\subseteq (x_{1},\ldots ,x_{n})R_{q}^{\prime }$. After inverting a finite
number of elements in $R^{\prime }$ we may assume that by localizing at a
single element $u\in R^{\prime }-q,$ the inclusion $q^{n+l}R_{u}^{\prime
}\subseteq (x_{1},\ldots ,x_{n})R_{u}^{\prime }$ still holds. We let $%
R^{\prime \prime }$ be the localized ring $R_{u}^{\prime }$.

This ring is a finitely generated $\overline{k}$-algebra extension of $R$ of
the same dimension. Moreover, the ideal $m^{\prime \prime }=qR^{\prime
\prime }$ is maximal, and

\begin{equation*}
q^{n+l}R^{\prime \prime }\subseteq (x_{1},\ldots ,x_{n})R^{\prime \prime }.
\end{equation*}%
Therefore, $\mathrm{Rad}(x_{1},\ldots ,x_{n})R^{\prime \prime }=m^{\prime
\prime }R^{\prime \prime }.$ Let $Q\subseteq R^{\prime \prime }$ be a minimal
prime ideal of $R^{\prime \prime }$ included in $m^{\prime \prime },$ and
such that $\dim (R^{\prime \prime }/Q)=\dim R^{\prime \prime }$. Thus, if we
let $S$ be the ring $R^{\prime \prime }/Q,$ and let $\eta =m^{\prime \prime
}S$, then, $S$ is a f.g. $\overline{k}$-algebra domain, with $ht(\eta )=n$,
and $\mathrm{Rad}(x_{1},\ldots ,x_{n})S=\eta S.$ Thus, condition 1 holds in $%
S$.

Besides, since there is a ring homomorphism $R\longrightarrow S,$ it is then
clear that condition 2 also holds in $S$. Finally, the Nullstellensatz
implies that condition 3 is true in $S$. So, we may replace $R$ by $S$.

\textbf{Second step}:

$(\Rightarrow)$ Let us take a presentation for $S$, say $S=k[T_{1},\ldots ,T_{\nu }]/I$.
Since $S$ is an algebra (resp. integer domain), we have that $I\subseteq k[T_{1},\ldots ,T_{\nu
}]$ is a ideal (resp. prime ideal). By the hypothesis, there exists a prime ideal $%
m\subseteq k[T_{1},\ldots ,T_{\nu }]$  with height $n$ in $S$ (resp. maximal, i.e., $%
\mathrm{ht}(m)=\nu =n+\mathrm{ht}(I)$ in $k[T_{1},\ldots ,T_{\nu }]$), and
there exists a tuple of elements in $S$, $(\underline{x},\underline{y}%
)=(x_{1}(\underline{t}),\ldots ,x_{n}(\underline{t}),y_{1,}\ldots ,y_{r}(%
\underline{t}))$ such that $\mathrm{Rad}(\underline{x})=m$ and 
$F_{\alpha}(\underline{x},\underline{y})=0$, for all $\alpha=1,\ldots,
l.$ Let us note that since we can suppose by the first step that $k$ is algebraically closed, 
the condition requiring that $k\subseteq S\overset{\pi }{\rightarrow }{%
S/\!m}$ is an isomorphism, turns out to be (trivially) satisfied.

Let $d>0$ be an integer that bounds all the complexities of the objects
mentioned above. So, the proof follows from Lemma \ref{Lema-construccion}.

$(\Leftarrow)$ Assume that there exists a uniform complexity $d$, and $c\in \mathbb{N}$ such that for 
any characteristic $p\geq c$ the $L$-structures of condition (b) exist. Then, by 
Lemma \ref{Lema-construccion} there exists an (algebraically closed) field of 
characteristic zero $k$, a finitely generated $k$-algebra (resp. domain) $S$;
 a prime ideal $m\subseteq S$ (resp. maximal)
 with height $n$, and $(\underline{x},\underline{y}%
)=(x_{1},\ldots ,x_{n},y_{1,}\ldots ,y_{r})$ a tuple of elements in $S$
such that $\mathrm{Rad}(\underline{x})=m$, and $F_{\alpha }(\underline{x},\underline{y})=0,$ 
for every $\alpha$. Finally, if the $m^{\prime}$ are maximal ideals, then by the Nullstellensatz,
  $k\subseteq S\overset{\pi }{\rightarrow }{S/\!m}$ is an isomorphism. 
\end{proof}
\begin{remark}
As pointed out in the introduction, the importance of the former Theorem lies in the fact that 
it generalizes Hochster's theorem, as it first appeared in \cite[Pag. 22]{hochster}. Now, Hochster's 
remarkable result provides a natural bridge for proving statements about equicharacteristic rings in characteristic
zero, by first reducing to the case of prime characteristic. In addition, Hochster's
theorem combined with M. Artin's approximation theorem \cite{artin} turns out to
be especially powerful. The full existence of Big Cohen Macaulay modules,
for instance, depends directly on Hochster's reduction method. For a survey
of different applications of this technique the reader may consult \cite[Ch. 4, 5, 7-9]{huneke1}. 

On the other hand, Hochster's theorem becomes especially relevant for the
construction of a theory of Tight Closure in characteristic zero
\cite[Ch. 3]{hochster1999}, \cite[Pag. 94]{huneke1} (for further readings 
see also \cite{huneke2} and \cite{epstein}).

\end{remark}

\section*{acknowledgements}
We would like to thank the reviewer and the editor for useful comments on the initial version of this manuscript. In addition, Danny Arlen de Jes\'us G\'omez-Ram\'irez would like to thank C. Thompson for valuable suggestions during the writing process of this article. Finally, D. A. J. G\'omez-Ram\'irez was supported by the Vienna Science and Technology Fund (WWTF) as part of the Vienna Research Group 12-004.



\begin{thebibliography}{9}
\bibitem{artin}
M. Artin, \emph{Algebraic approximation of structures over complete
  local rings}, Publications math{\'e}matiques de l'IH{\'E}S \textbf{36},
  23--58(1969)
\bibitem{eisenbud} D. Eisenbud, \emph{Commutative Algebra with a View Toward
Algebraic Geometry}, vol. 150, GTM, Springer Science \& Business Media, New York (1995)

\bibitem{epstein}
N. Epstein, \emph{A guide to closure operations in commutative algebra},
  Progress in commutative algebra \textbf{2}, 1--37 (2012)

\bibitem{hochster} 
M. Hochster, \emph{Topics in the Homological Theory of Modules over Commutative Rings},
Proceedings of the Nebrasca C.B.M.S. Conference, (Lincoln, Nebraska, 1974), Amer. Math. Soc., Providence, 75 pp, (1975)

\bibitem{hochster1999}
M. Hochster and C. Huneke, \emph{Tight closure in equal characteristic
  zero}, preprint (1999).

\bibitem{huneke1}
C. Huneke, \emph{Tight closure and its applications}, no.~88, American
  Mathematical Soc. (1996)

\bibitem{huneke2}
C. Huneke, \emph{Tight closure, parameter ideals, and geometry}, Six lectures on
  commutative algebra, Springer Science \& Business Media, pp.~187--239 (1998)

\bibitem{Matsumura} 
H. Matsumura, Commutative Algebra, Mathematics
Lecture Note Series, 56, Benjamin/Cummings Publishing Company, San Francisco (1980)

\bibitem{bounds}
H. Schoutens, Bounds in Cohomology, Israel J. Math. 116, 125-169 (2000)

\bibitem{schoutens} 
H. Schoutens, The Use of Ultraproducts in
Commutative Algebra, Lecture Notes in Mathematics, Springer Science \& Business Media, New York
(2010)

\end{thebibliography}
\end{document}